\newtheorem{lemma}{Lemma}
\newtheorem{theorem}[lemma]{Theorem}
\newtheorem{df}[lemma]{Definition}
\newtheorem{cl}[lemma]{Claim}
\newtheorem{prop}[lemma]{Proposition}
\newtheorem{question}[lemma]{Question}
\newtheorem{rem}[lemma]{Remark}
\begin{document}

\title{Small-large subgroups of locally compact Abelian Polish groups}
\author{Márk Poór}
\address{E\"otv\"os Lor\'and University, Institute of Mathematics, P\'azm\'any P\'eter
	s. 1/c, 1117 Budapest, Hungary}
\email{sokmark@caesar.elte.hu}
\thanks{The author was supported by the National
	Research, Development and Innovation Office – NKFIH, grants no. 104178, 124749 and 129211. \\
\raisebox{0.5cm}{\parbox[c]{11cm}{
		\scshape Supported by the \'UNKP-18-3 New National Excellence Program of the Ministry of Human Capacities.
}}}

\maketitle

\begin{abstract}
	In \cite{small-large} Ros\l{}anowski and Shelah asked whether every locally compact non-discrete group has a null but non-meager subgroup. In this paper
	we give an affirmative answer in the Polish Abelian case.

\end{abstract}

\section{Introduction}

We are interested in null, but non-meager subgroups of locally compact Abelian (LCA) Polish groups.
Only the non-discrete case is of interest, since non-empty open sets are of positive measure, thus in a discrete group $\{e \}$ has positive measure, and obviously is of second category.
It is known that under the Continuum Hypothesis one can construct null but non-meager and meager but non-null subgroups in the Cantor group or the reals, and Talagrand proved that in the Cantor group there is a null non-meager filter, that is a subgroup too \cite{talag}. Due to Fremlin and Shelah it is known that an unpublished result of H.M. Friedman implies that in the Cohen model there is no meager but non-null subgroup of the Cantor group \cite{Burke}. In \cite{small-large}, Ros\l{}anowski, and Shelah constructed $ZFC$ examples for null, but non-meager subgroups
of the reals and the Cantor group, and showed that it is consistent with $ZFC$ that  in the two groups every meager
subgroup is null. Then they asked two questions:

\textbf{Problem 5.1 \cite{small-large}}
\begin{enumerate}[(1)]
	\item Does every locally compact group (with complete Haar
	measure) admit a null non–meager subgroup?
	\item Is it consistent that no locally compact group has a meager non–null subgroup?
\end{enumerate}
In the first part of the paper we construct null but non-meager subgroups in  every non-discrete  locally compact Abelian  Polish group.  In the second section we will show that it is consistent that in each locally compact Polish group meager subgroups are always null.

\section{Preliminaries and notations}

\normalsize

We say that the topological groups $G$ and $H$ are isomorphic, in symbols $G \simeq H$, if there is an algebraic isomorphism, which is also a homeomorphism.
Under the symbol $\leq$ we mean the subgroup relation, i.e. $H \leq G$ symbols that $H$ is a subgroup of $G$,
and $H \vartriangleleft G$ is for normal subgroups.
 All topological groups are assumed to be Hausdorff.

 A topological group  $G$ is said to be locally compact if for each $g \in G$, there is a neighborhood $B$ of $g$, which is compact (i.e. each point has an open neighborhood which has compact closure).
 It is known that for a locally compact Hausdorff topological group $G$, if $C \vartriangleleft G$ is a closed normal subgroup, then
 $G/C$ is locally compact and Hausdorff. Also, if $G$ is Polish then so is $G/C$.
Since in compact Hausdorff spaces the Baire category theorem is true (i.e. no nonempty open set can be covered by countably many nowhere dense sets), it is also true in locally compact Hausdorff spaces.

For any topological space $X$, $\mathcal{B}(X)$ denotes the Borel sets, i.e. the $\sigma$-algebra generated by the open sets.

Recall the following definition of left Haar measure:
\begin{df} Let $G$ be a locally compact group, and $\mu$ be a Borel measure, i.e.
	\[ \mu : \mathcal{B}(G) \to [0, \infty] \]
	such that
	\begin{itemize}
		\item	$\mu$ is left-invariant, i.e. for every $g \in G$, $B \in \mathcal{B}(G)$
			\[ \mu(gB) = \mu(B), \]
		\item $\mu(U) > 0$ for each open $U \neq \emptyset$,
		\item $\mu(K) < \infty$ for each compact set $K$,
		\item $\mu$ is inner regular with respect to the compact sets, that is for every  Borel $B$
		  \[  \mu(B) = \sup \{ \mu(K): \ K \subseteq B, \ K \text{ is compact} \} .\]
	\end{itemize}
	Then $\mu$ is called a left Haar measure of $G$.
\end{df}
It is known that a left Haar measure always exists, and it is unique up to a positive multiplicative constant.
By slight abuse of notation we will identify $\mu$ with its completion, i.e. every set $H \subseteq G$ is measurable if $H$ differs from a Borel by (at most) a null Borel set, i.e.
\[ \text{ if } (\exists B, B' \in \mathcal{B}:  B \bigtriangleup H \subseteq B' \ \wedge \mu(B') = 0) \ \text{ then } \mu(H):= \mu(B). \]

\begin{rem}
	There is another definition of (left) Haar measure in locally compact groups, but in locally compact Polish groups (more generally in locally compact groups which are $\sigma$-compact) these two coincide.
\end{rem}

From now on $\mathcal{N}$ will denote the null-ideal w.r.t. a left Haar measure $\mu$, i.e.
\[ \mathcal{N} = \{ H \subseteq G:  \mu(H) = 0 \}. \]

It is known that for any locally compact group the null-ideals of a left Haar measure and a right Haar measure coincide, thus we can 
speak about null sets (\cite[442F ]{Fremlin}).

\vspace{1cm}

\section{Null but non-meager subgroups}

First, using abstract Fourier analytic methods we will show that the general (Polish LCA) case can be reduced to the case of the circle group, the group of $p$-adic integers, and the direct product of countably many finite (Abelian) groups. Provided that these groups have null but non-meager subgroups, we only have to show that an arbitrary LCA group has a quotient isomorphic to one of the aforementioned three groups, and then verify that pulling-back an appropriate  subgroup of the quotient will work.
The idea of first finding a nice quotient group, and constructing the desired object for that group is from \cite{elektoth}.

Since the direct product of $\omega$-many finite groups, and $\mathbb{R}$ has a null but non-meager subgroup, and so does the circle group by this (see \cite{small-large}), we only have to 
construct such subgroups in the group of $p$-adic integers.

The following definition of the $p$-adic integers is from \cite[(10.2)]{abstr}.

In Lemma $\ref{folemma}$  we prove that in any non-discrete LCA group an appropriate open subgroup has a nice quotient group.
After that, by Proposition $\ref{padikus}$ the case of $p$-adic integers is handled.

Finally we summarize, and state our main result in Theorem $\ref{fot}$.

\begin{df} \label{padikusdef}
	Let $p \in \mathbb{N}$ be a prime. Then consider the product set  
	\[ \Delta_p = \prod_{i = 0}^\infty \{ 0,1, \dots,p-1 \} \]
	with the following group operation. Let $x, y \in \Delta_p$. Define the series $t \in \prod_{i=0}^\infty \{0,1\}$, $z \in \prod_{i=0}^\infty \{ 0,1, \dots, p-1\} $  inductively so that
	\begin{equation}\label{elsojegy}
	x_0 + y_0 = t_0p + z_0 ,
	\end{equation}
	and
	\begin{equation} \label{tobbijegy}
	t_{i} + x_{i+1} +y_{i+1} = t_{i+1} p + z_{i+1}
	\end{equation}
	holds for each $i \in \omega$.
	Then $z \in \Delta_p$ is the sum of $x$ and $y$.
	
	The topology is defined by the following neighborhood base of  $0$  (i.e. the identically $0$ sequence)
	\[ \Lambda_i = \{ x \in \Delta_p \ | \ x_j = 0 \text{ for each } j<i \}  \ \ (i \in \omega ) .\]
	 Hence the sets of the form
	\begin{equation} \label{top}  y \Lambda_n = \{ x \in \Delta_p \ | \ x_j = y_j \ (j<n)  \} \end{equation}
	is a base consisting of clopen sets. In other words it is the product topology.

\end{df}


\begin{lemma} \label{folemma}
	Let $G$ be a locally compact Abelian group, which is non-discrete. Then there is an open subgroup $G' \leq G$, and a closed subgroup $C \leq G'$ of $G'$, such that the quotient group $G'/C$ is 
	isomorphic to one of the following:
	\begin{itemize}
		\item the circle group $\mathbb{T} \simeq \mathbb{R} / \mathbb{Z}$,
		\item the product of $\omega$-many finite (Abelian) groups $\prod_{i \in \omega} G_i$ (with each $G_i$ having at least two elements),
		\item the group of $p$-adic integers ($\Delta_p$) for some prime $p$.
	\end{itemize}
\end{lemma}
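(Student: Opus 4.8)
The plan is to use the structure theory of locally compact Abelian (LCA) groups. First I would recall that by the standard structure theorem for LCA groups (see \cite{abstr}), any LCA group $G$ has an open subgroup of the form $\mathbb{R}^n \times K$ where $K$ is a compact Abelian group; replacing $G$ by this open subgroup we may assume $G = \mathbb{R}^n \times K$ with $K$ compact. If $n \geq 1$ we are essentially done: $\mathbb{R}^n \times K$ has $\mathbb{R}$ as a quotient (project to one $\mathbb{R}$-coordinate and kill the rest, which is a closed subgroup), and $\mathbb{R}$ has $\mathbb{T} \simeq \mathbb{R}/\mathbb{Z}$ as a quotient by the closed subgroup $\mathbb{Z}$. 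So the substantive case is $G = K$ compact and non-discrete (note $G$ being non-discrete passes to the relevant open subgroup, since open subgroups of non-discrete groups are non-discrete).

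Next I would handle the compact case via Pontryagin duality. If $G = K$ is compact and non-discrete, its dual $\widehat{K}$ is discrete and infinite. I would split into two cases according to the torsion structure of $\widehat{K}$. If $\widehat{K}$ has an element of infinite order, then $\widehat{K}$ contains a copy of $\mathbb{Z}$, hence surjects onto $\mathbb{Z}/p\mathbb{Z}$ for each prime — but more usefully, to get $\Delta_p$ as a quotient of $K$ I want $\widehat{K}$ to contain the Prüfer group $\mathbb{Z}(p^\infty)$ as a subgroup, since $\widehat{\mathbb{Z}(p^\infty)} \simeq \Delta_p$; and a subgroup of the discrete group $\widehat{K}$ dualizes to a quotient of $K$. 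Alternatively, if $\widehat{K}$ has an element of infinite order then one extracts a quotient $\mathbb{Z}$ of $\widehat{K}$, which dualizes to a closed copy of $\mathbb{T}$ sitting inside $K$ — but we need a \emph{quotient} of an open subgroup, not a subgroup, so I should phrase everything on the dual side: find a subgroup $H \leq \widehat{K}$ with $\widehat{H}$ isomorphic to one of the three target groups, and then $C := H^\perp \leq K$ is closed with $K/C \simeq \widehat{H}$.

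So the combinatorial heart is: show that every infinite discrete Abelian group $A = \widehat{K}$ contains a subgroup $H$ with $\widehat{H}$ isomorphic to $\mathbb{T}$, to $\prod_{i\in\omega} G_i$ with finite $G_i$, or to $\Delta_p$. Equivalently, $H \simeq \mathbb{Z}$, or $H \simeq \bigoplus_{i\in\omega} G_i$ (infinite direct sum of finite groups), or $H \simeq \mathbb{Z}(p^\infty)$. I would argue: if $A$ has an element of infinite order, take $H \simeq \mathbb{Z}$. Otherwise $A$ is an infinite torsion Abelian group. If some prime $p$ divides the orders of infinitely many elements in a way that produces an infinite $p$-subgroup, look at the $p$-primary component $A_p$; an infinite Abelian $p$-group either has an infinite direct sum of finite cyclic groups as a subgroup (if its exponents are unbounded or it has infinite rank at some level) giving the $\prod G_i$ case, or else it has bounded exponent and infinite rank, again giving $\bigoplus_i \mathbb{Z}/p^k\mathbb{Z}$, or — the remaining possibility — it contains a copy of the Prüfer group $\mathbb{Z}(p^\infty)$. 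If no single prime component is infinite, then infinitely many primes occur, and choosing one finite cyclic subgroup from each gives $\bigoplus_i G_i$.

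The main obstacle I anticipate is this last classification step: cleanly arguing that an infinite Abelian torsion group always contains either an infinite direct sum of finite groups or a Prüfer group as a subgroup. This is essentially a consequence of the structure theory of Abelian groups (Kulikov's theorem / the theory of divisible and reduced groups: an infinite Abelian $p$-group that contains no $\mathbb{Z}(p^\infty)$ is reduced, and an infinite reduced $p$-group has a basic subgroup which, being infinite, is an infinite direct sum of finite cyclic $p$-groups). I would cite Fuchs's \emph{Infinite Abelian Groups} for the precise statements rather than reprove them, and devote the bulk of the written proof to organizing the case split and to checking the duality bookkeeping: that an open subgroup $G'$ of a non-discrete $G$ is non-discrete and locally compact Abelian; that a compact $K$'s dual is discrete; that a subgroup $H \leq \widehat{K}$ yields $C = H^\perp$ closed with $\widehat{K/C} \simeq H$, hence $K/C \simeq \widehat{H}$ by Pontryagin reflexivity; and that $\widehat{\mathbb{Z}} \simeq \mathbb{T}$, $\widehat{\mathbb{Z}(p^\infty)} \simeq \Delta_p$, and $\widehat{\bigoplus_i G_i} \simeq \prod_i \widehat{G_i}$, a product of finite groups.
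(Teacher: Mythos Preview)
Your proposal is correct and follows essentially the same route as the paper: reduce via the structure theorem to an open subgroup $\mathbb{R}^n \times K$, dispose of $n\geq 1$ by quotienting to $\mathbb{T}$, and in the compact case pass to the infinite discrete dual, find a subgroup isomorphic to $\mathbb{Z}$, $\bigoplus_i G_i$, or $\mathbb{Z}(p^\infty)$, and dualize back via annihilators. The only difference is cosmetic: the paper cites \cite[Thm.~2.16]{elektoth} as a black box for the algebraic fact that every infinite abelian group contains one of those three subgroup types, whereas you propose to argue it directly from the torsion/reduced/divisible structure theory and cite Fuchs---both are fine.
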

\begin{proof}
	By the Principal Structure Theorem of LCA groups \cite[2.4.1]{Ru}, there is an open subgroup $H$ of $G$ of
	the form $H \simeq K \times \mathbb{R}^n$ for some $n \geq 0$, where $K$ is compact. If $n>0$, then 
	$K \times (\mathbb{Z} \times \mathbb{R}^{n-1}) \vartriangleleft H$ is a closed normal subgroup of $H$, 
	\[ H / (K \times \mathbb{Z} \times \mathbb{R}^{n-1} ) \simeq (K \times \mathbb{R} \times \mathbb{R}^{n-1})/ (K \times \mathbb{Z} \times \mathbb{R}^{n-1} ) \simeq \mathbb{R}/\mathbb{Z} \simeq \mathbb{T}, \]
	we are done. Hence we can assume that $n=0 $, i.e. $H = K$ is compact. First observe that since $H \leq G$ is open, and $G$ was non-discrete, $H$ cannot be discrete. 
	
	Next we claim that the dual group $\hat{H}$ of $H$ is infinite.
	Assume on the contrary that $\hat{H}$ is finite. Then it is compact, thus the dual $\hat{\hat{H}}$ of $\hat{H}$ would be discrete (by \cite[1.2.5]{Ru}). But by the Pontryagin Duality Theorem (\cite[1.7.2]{Ru}) $\hat{\hat{H}}$ is isomorphic to $H$ which was not discrete, a contradiction.
	
	Now, by \cite[1.7.2]{Ru}, and \cite[2.1.2]{Ru} groups of the form $H/C$ (where $C$ is a closed subgroup) are the same (isomorphic as topological groups) as dual groups of the  closed subgroups of $\hat{H}$.
	This means that having a closed subgroup $L \leq \hat{H}$ guarantees that there is a closed subgroup $C \leq H$ such that the quotient group 
	\[ I = H/C \simeq \hat{L} \]
	(Note that each subgroup of $\hat{H}$ is automatically closed, since according to \cite[1.2.5]{Ru} the compactness of $H$ implies that $\hat{H}$ is discrete.)
	Now using \cite[Thm. 2.16]{elektoth}, and the infiniteness of $\hat{H}$, we obtain that $\hat{H}$ has a subgroup (let's say $L$) isomorphic to either 
	\begin{enumerate}[(i)]
		\item $\mathbb{Z}$, or
		\item $\oplus_{i \in \omega} G_i$ where each $G_i$ is a finite group of at least two elements, or
		\item the quasicyclic group $C_{p^\infty}$ for some prime $p$.
	\end{enumerate}

	This means that 
	\begin{enumerate}[(i)]
		\item if $L \simeq \mathbb{Z}$, then 
		 there is a closed subgroup $C \leq H$ such that
		\[ H/C \simeq \hat{L} \simeq \mathbb{T}. \]
		
		\item If $ L \simeq \oplus_{i \in \omega}G_i$ (where each $G_i$ has at least two elements):  
		First note that an LCA group is compact and discrete at the same time iff so is its dual by \cite[1.2.5]{Ru} and \cite[1.7.2]{Ru}. This means that an LCA group is finite iff its dual is finite.
		Second, since the dual of the dual of an LCA group is the group itself \cite[1.7.2]{Ru}, if $\hat{G_i} = \{ 0 \} $		was the trivial group, then $G_i$ also would  be the trivial group, therefore each $\hat{G_i}$ has at least two elements.
		 From this we obtain that each $\hat{G_i}$ is finite, and is of cardinality at least two, and using that the dual of the direct sum (endowed with the discrete topology) is the product of the dual groups \cite[2.2.3]{Ru} we get that 
		 \begin{equation} \label{dir}
		  (\oplus_{i \in \omega}G_i)\hat{} \simeq \prod_{i \in \omega} \hat{G_i} .
		  \end{equation}
		  Thus there is a closed subgroup $C \leq H$ such that 
		  \[ H/C \simeq  \hat{L} \simeq \prod_{i \in \omega} \hat{G_i} \ \ \  ( \forall i: \ |\hat{G_i}| >1 ). \]
		  
		\item If $L \leq \hat{H}$ is isomorphic
		to the quasicyclic group $C_{p^\infty}$ (for some prime $p$), then as $\hat{\Delta_p} \simeq C_{p^\infty}$  \cite[25.2]{abstr} we obtain that 
		\[ \hat{L} \simeq \hat{C}_{p^\infty} \simeq \Delta_p \]
		by \cite[1.7.2]{Ru}.
	    Therefore there is a closed subgroup $C \leq H$ for which
	     \[ H/C \simeq \hat{L} \simeq \Delta_p. \]

	\end{enumerate}

	This shows that there exists a closed subgroup $C \leq H$ such that $I \simeq H/C$ is either the circle group, 
	the $p$-adic integers, or the product of $\omega$-many finite groups, each finite group having at least two elements.
	
\end{proof}

\begin{prop} \label{padikus}
	Let $p \in \mathbb{N}$ be a prime. Then the group of $p$-adic integers has a null but non-meager subgroup.
\end{prop}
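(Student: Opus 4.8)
The proposition only asks for the existence of a null, non‑meager subgroup, so the task is to construct one. Throughout I write $\Delta_p$ for the $p$‑adic integers as in Definition~\ref{padikusdef}, with $\mu$ the Haar measure normalised by $\mu(\Delta_p)=1$; this is a non‑atomic probability measure of full support on the compact metric group $\Delta_p$.

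The plan rests on two preliminary facts. First, any subgroup $H\le\Delta_p$ with $H\in\mathcal{N}$ necessarily has uncountably many cosets: if $[\Delta_p:H]\le\aleph_0$ then, choosing a Borel null set $B\supseteq H$, we would get $\Delta_p=\bigcup_k(t_k+H)\subseteq\bigcup_k(t_k+B)$, a countable union of translates of a $\mu$‑null set, contradicting $\mu(\Delta_p)=1$. Hence the cheap argument for non‑meagerness (a subgroup of countable index cannot have all its cosets meager, by Baire) is unavailable, and I will instead use the equivalence: \emph{$H$ is non‑meager iff $H$ meets every dense $G_\delta$ subset of $\Delta_p$} (a set is meager iff it lies in a meager $F_\sigma$, i.e.\ is disjoint from some dense $G_\delta$). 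Second, there is a dense $G_\delta$ set $D\subseteq\Delta_p$ with $0\in D$ and $\mu(D)=0$: cover a countable dense subset $\{q_k\}\ni 0$ by clopen neighbourhoods $V^m_k\ni q_k$ with $\mu(V^m_k)<2^{-m-k}$, set $U_m=\bigcup_kV^m_k$ and $D=\bigcap_mU_m$. Moreover, for each $n\in\mathbb{Z}\setminus\{0\}$ and each $h\in D$ the set $\{y\in\Delta_p:ny\in D-h\}$ is again a dense $G_\delta$: indeed $D-h$ is a dense null $G_\delta$, and, writing $n=p^j u$ with $u$ coprime to $p$ (so $u$ is a unit of $\Delta_p$), the map $y\mapsto ny$ is a continuous open surjection of $\Delta_p$ onto the clopen subgroup $p^{j}\Delta_p$, so preimages of dense $G_\delta$ sets are dense $G_\delta$.

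With these in hand I would fix an enumeration $\langle E_\alpha:\alpha<\mathfrak{c}\rangle$ of all dense $G_\delta$ subsets of $\Delta_p$ and build, by recursion on $\alpha<\mathfrak{c}$, an increasing chain of subgroups $N_\alpha\le\Delta_p$ with $N_0=\{0\}$, $N_\lambda=\bigcup_{\alpha<\lambda}N_\alpha$ at limits, subject to $N_\alpha\subseteq D$ and $N_{\alpha+1}\cap E_\alpha\ne\emptyset$. The successor step carries the content: given $N_\alpha\subseteq D$ and a dense $G_\delta$ $E=E_\alpha$, I must find $x\in E$ with $\langle N_\alpha,x\rangle\subseteq D$, and set $N_{\alpha+1}=\langle N_\alpha,x\rangle$. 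Since $\langle N_\alpha,x\rangle=\{h+nx:h\in N_\alpha,\ n\in\mathbb{Z}\}$ and $N_\alpha\subseteq D$, one checks that $\langle N_\alpha,x\rangle\subseteq D$ holds precisely when
\[
x\in D_\alpha:=\bigcap_{n\in\mathbb{Z}\setminus\{0\}}\ \bigcap_{h\in N_\alpha}\{y\in\Delta_p:ny\in D-h\},
\]
an intersection of dense $G_\delta$ sets by the second preliminary fact. \emph{Granting that $D_\alpha\cap E\ne\emptyset$ at every stage}, picking $x$ there completes the recursion, and then $N:=\bigcup_{\alpha<\mathfrak{c}}N_\alpha$ is a subgroup with $N\subseteq D$, hence $N\in\mathcal{N}$; $N$ meets every $E_\alpha$, hence is non‑meager; and $N\ne\Delta_p$ because $N$ is null — so $N$ is the required subgroup.

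The main obstacle is exactly the emphasised proviso $D_\alpha\cap E\ne\emptyset$. When $N_\alpha$ is countable, $D_\alpha$ is a countable intersection of dense $G_\delta$ sets, hence comeager, so it certainly meets the comeager set $E$. But in a recursion of length $\mathfrak{c}$ the groups $N_\alpha$ cannot stay countable — at a limit ordinal of uncountable cofinality $|N_\alpha|$ jumps — and for uncountable $N_\alpha$ the intersection defining $D_\alpha$ need not be comeager, so one is not free to realise all $\mathfrak{c}$ dense $G_\delta$ sets by naively adding one element per step (the obstruction is real: if $\Delta_p$ happens to be covered by $\kappa<\mathfrak{c}$ meager sets, an intersection of $\kappa$ dense $G_\delta$ sets can be empty). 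Getting around this is the technical heart of the argument: it calls either for controlling the ``genuinely new'' part of $N_\alpha$ (for instance modulo a fixed countable dense subgroup with respect to which $D$ is invariant, so that $D_\alpha$ in effect depends on only countably many cosets), or for replacing the one‑at‑a‑time bookkeeping by a fusion/amalgamation scheme adapted to the inverse–limit structure $\Delta_p=\varprojlim\mathbb{Z}/p^n$. Once that is arranged, the recursion goes through and yields a null, non‑meager subgroup of $\Delta_p$.
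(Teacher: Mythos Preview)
Your proposal has a genuine gap that you yourself name but do not close: the transfinite recursion of length $\mathfrak{c}$ needs, at each successor step, that the set $D_\alpha\cap E_\alpha$ be nonempty, and your argument for this (Baire category) only works while $|N_\alpha|$ is countable. Once $|N_\alpha|\ge\operatorname{cov}(\mathcal{M})$ the intersection $D_\alpha$ can be empty, and in models with $\operatorname{cov}(\mathcal{M})<\mathfrak{c}$ (which are perfectly consistent with ZFC) the recursion simply stalls. Your two proposed escapes are not arguments: the ``invariance modulo a countable dense subgroup'' idea does not reduce the number of relevant constraints, because once $N_\alpha$ is uncountable it has uncountably many cosets modulo any countable subgroup, so $D_\alpha$ still depends on uncountably many $h$'s; and ``fusion/amalgamation adapted to the inverse limit'' is a phrase, not a construction. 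Since the proposition is meant to be a ZFC theorem (that is the whole point of the paper, in contrast to the easy CH constructions), an approach that only goes through under extra hypotheses like $\operatorname{cov}(\mathcal{M})=\mathfrak{c}$ does not prove it.

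The paper avoids this entirely by giving an \emph{explicit} subgroup, with no transfinite recursion: fix a non-principal ultrafilter $\mathcal{U}$ on $\omega$, let $I^k_j=[j^2+k,(j+1)^2)$, and take $H=\bigcup_k H_k$ where $H_k$ consists of those $x\in\Delta_p$ that are constant $0$ or constant $p-1$ on $I^k_j$ for $\mathcal{U}$-almost every $j$. Closure under addition is a digit-by-digit carrying argument (if $x,y$ are each constant $0$ or $p-1$ on $[m,l)$ then $x+y$ is constant $0$ or $p-1$ on $[m+1,l)$), nullness is a Borel--Cantelli estimate using $|I^k_j|=2j+1-k\to\infty$, and non-meagerness is proved by pushing forward along a continuous open surjection $f:\Delta_p\to 2^\omega$ and applying the combinatorial characterisation of comeager sets in $2^\omega$ (Bartoszy\'nski--Judah). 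This is a ZFC construction; what you have written is, at best, an outline that works under an additional cardinal-invariant hypothesis.
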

\begin{proof}
Fix a non-principal ultrafilter $\mathcal{U}$ on $\omega$, and for each $j \in \omega$, $0 \leq k < (j+1)^2- j^2$ define the intervals 
\[ I^k_{j} = [j^2+k, (j+1)^2) \subseteq \omega. \]
Let $\mu$ denote the probability Haar measure  (i.e. $\mu(\Delta_p) =1$), and 
\begin{equation} \label{Hdf} H_k =  \left\{ x \in \Delta_p \ | \ \{ j \in \omega \ | \ k< (j+1)^2 - j ^2 \ \wedge \ (x_{|I^k_j} \equiv 0 \vee x_{|I^k_j} \equiv p-1)  \} \in \mathcal{U} \right\} 
 \end{equation}
i.e. those which are constant $0$-s or $p-1$-s on $\mathcal{U}$-almost every $I^k_j$-s.
(Note that since  $I^k_j \supseteq I^{k+1}_j$ if $j$ is such that both $I^k_j$ and $I^{k+1}_j$ are defined, 
\begin{equation} \label{mon}
H_k \subseteq H_{k+1} \text{ .)}
\end{equation}
Define the set $H$ as follows
\[ H = \bigcup_{k \in \omega} H_k = \]
\[ =  \left\{  x \in \Delta_p \ | \ \exists k: \ \{ j \in \omega \ | \ (j > (k-1)/2) \ \wedge \  (x_{|I^k_j} \equiv 0 \vee x_{|I^k_j} \equiv p-1) \} \in \mathcal{U} \right\}. \]
We claim that $H$ is the desired subgroup.
\begin{enumerate}[(i)] 
	\item $H$ is a subgroup.
	
	Let $x,y \in H$. Using $\eqref{mon}$ let $n \in \omega$ be such that $x,y \in H_n$.
	Consider the following elements of $\mathcal{U}$
	\[ U_x = \{ j \in \omega\ | \   x_{|I^n_j} \equiv 0 \vee x_{|I^n_j} \equiv p-1 \} \in \mathcal{U}, \]
	\[U_y = \{ j \in \omega\ | \   y_{|I^n_j} \equiv 0 \vee y_{|I^n_j} \equiv p-1 \} \in \mathcal{U}, \]
	and let $U = U_x \cap U_y \in \mathcal{U}$ denote their intersection.
	Now, if $j \in U$, then the following lemma will ensure that $(x+y)_{|I^{n+1}_j } \equiv 0$, or $(x+y)_{|I^{n+1}_j} \equiv p-1$, which
	means that $x+y \in H^{n+1} \subseteq H$.
	\begin{lemma}
		Let $x,y \in \Delta_p$ and $m<l \in \omega$ such that
		\[ x_{|[m,l)]} \equiv 0 \text{, or } x_{|[m,l)]} \equiv p-1 \]
		and
		\[ y_{|[m,l)]} \equiv 0 \text{, or } y_{|[m,l)]} \equiv p-1 .\]
		Then
		\[ (x+y)_{|[m+1,l)]} \equiv 0 \text{, or } (x+y)_{|[m+1,l)]} \equiv p-1 .\]
	\end{lemma}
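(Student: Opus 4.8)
The plan is to argue directly from the digit-wise definition of addition in $\Delta_p$, keeping track of the carry sequence. It is convenient to set $t_{-1} := 0$, so that $(\ref{elsojegy})$ becomes the $i=-1$ instance of $(\ref{tobbijegy})$, and hence for every $i\ge 0$ the digits and carries of $z:=x+y$ satisfy
\[
z_i = (t_{i-1}+x_i+y_i)\bmod p, \qquad t_i = \left\lfloor \frac{t_{i-1}+x_i+y_i}{p}\right\rfloor .
\]
A one-line induction shows $t_i\in\{0,1\}$ for every $i$: if $t_{i-1}\le 1$ then $t_{i-1}+x_i+y_i\le 1+2(p-1)<2p$ (using $p\ge 2$), so $t_i\le 1$.

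Now write $a:=x_m=x_{m+1}=\dots=x_{l-1}\in\{0,p-1\}$ and $b:=y_m=\dots=y_{l-1}\in\{0,p-1\}$, so that $a+b\in\{0,\,p-1,\,2p-2\}$, and consider the map $f\colon\{0,1\}\to\{0,1\}$ given by $f(c)=\lfloor(c+a+b)/p\rfloor$; by the formulas above, $t_i=f(t_{i-1})$ whenever $m\le i\le l-1$. The key step is the elementary fact that $f(f(c))=f(c)$ for all $c\in\{0,1\}$, i.e. $f$ maps $\{0,1\}$ into the set of its own fixed points. This is a short case split on $a+b$: if $a+b=0$ then $f\equiv 0$; if $a+b=2p-2$ then $f\equiv 1$; if $a+b=p-1$ then $f(0)=0$ and $f(1)=1$. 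Therefore $t_{m+1}=f(t_m)=f(f(t_{m-1}))=f(t_{m-1})=t_m$, and by induction $t_i=t_m$ for all $m\le i\le l-1$.

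Substituting back, for every $i$ with $m+1\le i\le l-1$ one gets $z_i=(t_{i-1}+x_i+y_i)\bmod p=(t_m+a+b)\bmod p$, a value independent of $i$; so $x+y$ is constant on $[m+1,l)$ (the assertion is vacuous when $l=m+1$). Finally this constant lies in $\{0,p-1\}$, as the same three cases show: if $a+b=0$ then $t_m=0$ and the value is $0$; if $a+b=2p-2$ then $t_m=1$ and the value is $(2p-1)\bmod p=p-1$; if $a+b=p-1$ the value is $(t_m+p-1)\bmod p$, equal to $p-1$ for $t_m=0$ and to $0$ for $t_m=1$. This is exactly the claimed dichotomy.

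I do not expect a real obstacle here. The only subtle point is the boundary coordinate $i=m$: its incoming carry $t_{m-1}$ is fixed by the digits below $m$ and is thus beyond our control, which is precisely why the lemma must drop the leftmost coordinate. What makes it work is that position $m$ turns this arbitrary incoming carry into $t_m=f(t_{m-1})$, which is already a fixed point of $f$; from there on the digits are completely forced and the computation above closes the argument.
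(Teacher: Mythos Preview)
Your proof is correct and follows essentially the same approach as the paper: both track the carry through the interval $[m,l)$ and split into the three cases $a+b\in\{0,\,p-1,\,2p-2\}$ determined by the constant digits of $x$ and $y$. Your packaging via the idempotent carry-update map $f(c)=\lfloor(c+a+b)/p\rfloor$ is a tidier way to organize what the paper does by explicit case-by-case induction on the carry, but the underlying computation is identical.
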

	\begin{proof}
		Let $t \in \prod_{i=0}^\infty \{0,1\}$, $z= x+y \in \prod_{i=0}^\infty \{ 0,1, \dots, p-1\} $ as in Definition $\ref{padikusdef}$, i.e. $\eqref{elsojegy}$ and $\eqref{tobbijegy}$ hold.
		Now we have three cases.
		\begin{itemize}
			\item Case 1. If $x_{|[m,l)]} =   y_{|[m,l)]} \equiv 0$,
			
			then $x_m+y_m+t_{m-1} =t_{m-1} \in \{0,1\}$, thus the recursive definition of $t$ and $z$ $\eqref{tobbijegy}$ implies that $t_m =0$. From this it is easy to see that $z_{m+1}=z_{m+2} = \dots = z_{l-1} =0$, i.e. $z_{|[m+1,l)]} \equiv 0$, as desired.
			\item Case 2. If $x_{|[m,l)]} \equiv 0 $, and    $y_{|[m,l)]} \equiv p-1$,
			
			then $z_{|[m+1,l)}$ depends only on $t_{m-1}$. If $t_{m-1} = 0$, then 
			\[ x_m+y_m+t_{m-1} = p-1 = 0 \cdot p + p-1 ,\]
			hence $ z_m = p-1 $,
			and $t_m = 0$. Using $\eqref{tobbijegy}$ it is straightforward to check that 
			\[ t_{m+1} = t_{m+2} = \dots = t_{l-1} = 0 ,\]
			and 
			\[ z_{m} = z_{m+1} = \dots = z_{l-1} = p-1. \]
			On the other hand, if $t_{m-1} = 1$, then 
			\[ x_m+y_m+t_{m-1} = p-1 +1 = 1 \cdot p + 0 ,\]
			i.e. $t_m= 1$, and $z_m = 0$. Similarly, applying $\eqref{tobbijegy}$ $l-m-1$ times, one can get that 
			\[ t_m=  t_{m+1} = t_{m+2} = \dots = t_{l-1} = 1 ,\]
			and 
			\[ z_{m} = z_{m+1} = \dots = z_{l-1} = 0. \]
						
			\item Case 3. If $x_{|[m,l)]} =   y_{|[m,l)]} \equiv p-1$,
			
			then 
			\[ x_m+y_m+t_{m-1} =(p-1) + (p-1) + t_{m-1}  = 1 \cdot p + p-2 + t_{m-1}, \]
			i.e. $t_m = 1$. Therefore we got that
			\[ x_{m+1}+y_{m+1}+t_{m} =(p-1) + (p-1) + 1  = 1 \cdot p + p-1, \]
			i.e. $z_{m+1} =p-1$, $t_{m+1} = 1$. Iterating this argument yields that
		  	\[ t_{m+1} = t_{m+2} = \dots = t_{l-1} = 1 ,\]
		  	and 
		  	\[ z_{m+1} = z_{m+2} = \dots = z_{l-1} = p-1. \]
			
		\end{itemize}
	\end{proof}

	It is left to show that $H$ is closed under taking inverse, i.e. if $x \in H_n$, then $-x \in H$.
	We will show that if $i>0$ and then $(-x)_i = p-1 - x_i$  from which we can conclude that
		\[ U_x = \{ l \in \omega\ | \ l > (k-1)/2 \wedge  (x_{|I^n_l} \equiv 0  \ \vee \  x_{|I^n_l} \equiv p-1) \} \in \mathcal{U} ,\] 
	then (since $0 \notin I_l^j$ if $l >0$)
	\[  \{ l \in \omega \setminus \{0\}\ | \ l > (k-1)/2 \wedge ((-x)_{|I^n_l} \equiv 0  \ \vee \  (-x)_{|I^n_l} \equiv p-1) \} \supseteq \]
	\[ \supseteq  U_x \setminus \{0 \} \in \mathcal{U}. \] 
	
	Now we have to show that if we define $w \in \Delta_p$ so that $w_0 = p - x_0$, $w_{i} = p-1-x_i$) ($i>0$), then $w + x = 0$ in $\Delta_p$.
	 Let $t \equiv 1 \in \prod_{i=0}^{\infty} \{0,1\}$ the constant $1$ function.
	 Now it is easy to see that $x_0 + w_0 = t_0 p$ and $x_i+w_i+1 = t_ip$, thus according to the rule of addition in $\Delta_p$, ($\eqref{elsojegy}$, $\eqref{tobbijegy}$), $x + w = 0$, i.e. $w = -x$.
	 \\

	\item $H$ is null.
	
	Since
	\[  H = \bigcup_{k \in \omega} H_k,  \]
	it is enough to show that $\mu(H_k)=0$ for each $k$.
	But using the non-principality of $\mathcal{U}$
	\[ \mu(H_k) = \]
	\[ = \mu\left( \left\{ x \in \Delta_p \ | \ \{ j \in \omega \ | \ j>(k-1)/2 \ \wedge \ (x_{|I^k_j} \equiv 0 \vee x_{|I^k_j} \equiv p-1)  \} \in \mathcal{U} \right\} \right) \leq \]
	\[ \leq \mu \left( \left\{ x \in \Delta_p \ | \ \left| \{ j \in \omega \ | \ j>(k-1)/2 \ \wedge \ (x_{|I^k_j} \equiv 0 \vee x_{|I^k_j} \equiv p-1)  \} \right| = \infty  \right\} \right) \]

	i.e. it suffices to show that those sequences, for which there exist infinitely many $j$-s such that the sequence is constant $0$ or $p-1$ on $I^k_j$ form a null set.
	Now, we know that for each $m$ the $p^m$-many translates of the open set 
	\[ \Lambda_m = \{ x \in \Delta_p \ | \ x_l=0 \ \text{ for each } l < m \} \]
	is a partition of $\Delta_p$ $\eqref{top}$, therefore because $\mu$ is a translation-invariant probability measure
	\[ \mu(y \Lambda_m) = \mu( \{x \in \Delta_p| \ x_l = y_l \ (\forall l < m) \} )= \frac{1}{p^m}, \]
	hence
	\[ \mu( \{ x \in \Delta_p \ | \ x_{I^k_j} \equiv 0 \} ) = \mu( \{ x \in \Delta_p \ | \ x_{I^k_j} \equiv p-1 \} ) = \frac{1}{p^{|I^k_j|}} = \frac{1}{p^{2j+1-k}} .\]
	Since this gives us that
	\[\mu( \{ x \in \Delta_p \ | \ x_{I^k_j} \equiv 0 \vee x_{I^k_j} \equiv p-1  \} ) = \frac{2}{p^{2j+1-k}} , \]
	for arbitrary fixed $l_0$ the following inequality will hold
	\[  \mu \left( \left\{ x \in \Delta_p \ | \ \left| \{ j \in \omega \ | \ j> (k-1)/2 \ \wedge \ (x_{|I^k_j} \equiv 0 \vee x_{|I^k_j} \equiv p-1)  \} \right| = \infty  \right\} \right)  = \]
	\[ = \mu \left( \bigcap_{l=0}^{\infty} \bigcup_{j=l}^{\infty}  \left\{ x \in \Delta_p \ |  (x_{|I^k_j} \equiv 0 \vee x_{|I^k_j} \equiv p-1)   \right\} \right) \leq \]
	\[ \leq  \mu \left( \bigcup_{j=l_0}^{\infty} \left\{ x \in \Delta_p \ |  (x_{|I^k_j} \equiv 0 \vee x_{|I^k_j} \equiv p-1)   \right\} \right)  .\]
	 We obtained the following obvious upper bound
	\[  \mu \left( \bigcup_{j=l_0}^{\infty}  \left\{ x \in \Delta_p \ |  (x_{|I^k_j} \equiv 0 \vee x_{|I^k_j} \equiv p-1)   \right\} \right) \leq \sum_{j=l_0}^{\infty} \frac{2}{p^{2j+1-k}} ,\]
	and as the latter tends to $0$ when $l_0$ tends to infinite, we are done.
	\item $H$ is non-meager. 
	
	We will show that even $H_0$ is non-meager.
	For each $j \in \omega$ define the mapping $f_j$ as follows
	\[ f_j :  \{0,1, \dots p-1 \}^{|I^0_j|} \to \{0,1   \} \]
	 \begin{equation} \label{fjdef} \underline{v} \mapsto \left\{ \begin{array}{cc} 0 & \text{ iff } v \equiv 0 \\ 1 &  \text{ otherwise} \end{array} \right.  \end{equation}
	 Let $f$ be the following mapping from $\Delta_p$ to $2^\omega$ (where we identify $\Delta_p$ with $\{ 0,1, \dots p-1\}^\omega$)
	 \[ f: \Delta_p \to 2^\omega \]
	 \begin{equation}  \label{fdef} x \mapsto (f_j(x_{|I^0_j} ))_{j_ \in \omega} \end{equation}
	Now suppose that $\Delta_p \setminus H$ is co-meager. As $f$ is a continuous surjective open mapping between Polish spaces, according to 	\cite[Lemma 2.6 ]{levelsets} $f$ maps a co-meager set onto a co-meager set, let
	$R = f(\Delta_p \setminus H)$ denote this co-meager set in $2^\omega$.
	Now, using \cite[Thm. 2.2.4 ]{Bart}), there exist a strictly growing sequence of non-negative integers 
	\[ 0 = n_0  <n_1 < \dots, \]
	and an element $r \in 2^\omega$ such that
	\begin{equation} \label{vegt} \left\{ s \in 2^\omega \ | \ |\{ j : \ s_{|[n_j,n_{j+1}) } \equiv r_{|[n_j,n_{j+1}) } \}| =  \infty \right\} \subseteq R  . \end{equation}
	Now the following disjoint sets cover $\omega$ (since the sequence $n_j$ is strictly growing)
	\begin{equation} \label{Ui} U_0 = \bigcup_{k \in \omega} [n_{2k}, n_{2k+1}), \ \ U_1 = \bigcup_{k \in \omega} [n_{2k+1}, n_{2k+2}), \end{equation}
	thus exactly one of them is in $\mathcal{U}$.
	Let $U$ denote that set. 
	Now define the following element $y$ of $2^\omega$
	\begin{equation} \label{ydefje} y_i = \left\{ \begin{array}{cc} 0 & \text{ iff } i \in U \\ r_i &  \text{ otherwise} \end{array} \right.\end{equation}
	Clearly (using $\eqref{Ui}$ and the fact that $U = U_i$ for some $i \in \{0,1\}$) there are infinitely many $j$-s for which $y_{|[n_j,n_{j+1})} \equiv r_{|[n_j,n_{j+1})}$, thus
	by $\eqref{vegt}$ 
	\[ y \in R = f(\Delta_p \setminus H) \]
	Hence, there is a \begin{equation} \label{nemeleme} z \in \Delta_p \setminus H, \end{equation} for which
	\begin{equation} \label{zd} f(z) = y. \end{equation}
	But for each $j \in \omega$, if
	\[j \in U \]
	\[ \Downarrow  \text{ (by }\eqref{fdef} \text{ and } \eqref{ydefje} \text{) } \]
	\[ y_j = f_j(z_{|I^0_j}) =  0 \]
	\[ \Downarrow \text{ (by }  \eqref{fjdef} \text{)}   \]
	\[  z_{|[j^2,(j+1)^2) } \equiv 0 .\]
\end{enumerate}
This means, as $U \in \mathcal{U}$, that  we found that for $\mathcal{U}$-almost every $j$,
$z_{|[j^2,(j+1)^2) } \equiv 0$, therefore $z \in H_0$  by the definition of $H_0$ $\eqref{Hdf}$, contradicting $\eqref{nemeleme}$.
\end{proof}

Now we are ready to state our result.

\begin{theorem} \label{fot}
	Let $G$ be a non-discrete Polish LCA group. Then there is a null, non-meager subgroup in $G$.
\end{theorem}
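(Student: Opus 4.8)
The plan is to glue together the pieces already in place: Lemma \ref{folemma} lets us replace $G$ by a clopen subgroup carrying a distinguished ``good'' quotient, on that quotient Proposition \ref{padikus} (together with the facts quoted from \cite{small-large} for $\mathbb{T}$ and for products of finite groups) produces a null non-meager subgroup, and a pull-back argument then carries it back to $G$.

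Concretely, first I would apply Lemma \ref{folemma} to obtain an open subgroup $G' \le G$ and a closed subgroup $C \le G'$ with $Q := G'/C$ isomorphic to one of $\mathbb{T}$, $\prod_{i\in\omega}G_i$ (the $G_i$ finite, $|G_i|\ge 2$), or $\Delta_p$. Since $G'$ is open in a Polish group it is Polish, hence so is $Q$ (a quotient of a Polish group by a closed subgroup), and the quotient homomorphism $\pi\colon G'\to Q$ is continuous, open and surjective. By Proposition \ref{padikus} in the $p$-adic case, and by \cite{small-large} in the remaining two, fix a Haar-null, non-meager subgroup $S\le Q$, and set $N:=\pi^{-1}(S)$. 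Because $\pi$ is a homomorphism, $N$ is a subgroup of $G'$, hence of $G$, so it remains only to see that $N$ is null and non-meager in $G$.

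For non-meagerness I would argue by contradiction inside $G'$: if $N$ were meager in $G'$, then $G'\setminus N$ would be comeager, so by \cite[Lemma 2.6]{levelsets} (applied, exactly as in the proof of Proposition \ref{padikus}, to the continuous open surjection $\pi$ between Polish spaces) its image $\pi(G'\setminus N)=Q\setminus S$ would be comeager in $Q$, contradicting that $S$ is non-meager. Hence $N$ is non-meager in $G'$; and since an open subgroup is automatically closed, meagerness in $G$ of a subset of $G'$ coincides with meagerness in $G'$, so $N$ is non-meager in $G$. For nullity, Weil's integration formula for $\pi\colon G'\to G'/C$ gives that the $\pi$-preimage of a $\mu_{G'/C}$-null Borel set is $\mu_{G'}$-null (the inner integral over the fibre $C$ is a constant times $\mathbf 1_A$, supported on a null set, hence integrates to $0$ even when $C$ is not compact), and the restriction to the clopen subgroup $G'$ of a Haar measure of $G$ is a Haar measure of $G'$ up to a scalar; applying this to a Borel hull of $S$ shows $N$ is null in $G$.

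The only step that is not pure bookkeeping is the non-meagerness transfer across $\pi$: a null non-meager subgroup cannot have the Baire property, so $S$ cannot simply be localised, and the honest route is precisely the lemma that continuous open surjections of Polish spaces send comeager sets to comeager sets — which is why I expect the proof to lean on \cite[Lemma 2.6]{levelsets} here just as Proposition \ref{padikus} does. A minor point to watch is that in the $\mathbb{R}^n$ ($n>0$) branch of Lemma \ref{folemma} the subgroup $C$ is not compact, so one must note explicitly that Weil's formula, and hence the ``preimage of a null set is null'' step, remains valid for arbitrary closed subgroups.
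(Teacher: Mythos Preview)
Your proposal is correct and follows essentially the same route as the paper: apply Lemma~\ref{folemma}, invoke Proposition~\ref{padikus} or \cite{small-large} on the quotient, pull back via $\pi$, use \cite[Lemma~2.6]{levelsets} for the non-meagerness transfer, and pass from $G'$ to $G$ via openness. The only cosmetic difference is in the nullness step: the paper exploits that $G'/C$ is compact in all three cases and quotes \cite[443T/(c)]{Fremlin} to realise $\nu$ as the push-forward of $\mu_{G'}$, whereas you invoke Weil's formula and take care of the non-compact-$C$ case directly---both are valid, and in fact your worry about non-compact $C$ is moot in the paper's formulation since only compactness of the quotient is used.
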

\begin{proof}
	According to Lemma $\ref{folemma}$, there is an open subgroup $G' \leq G$ for which there exists a closed normal subgroup $C \vartriangleleft G'$ such that $G'/C$ is isomorphic to one of the following
	\begin{enumerate}[(i)]
		\item \label{a} the circle group $\mathbb{T} \simeq \mathbb{R} / \mathbb{Z}$,
		\item \label{b} the product of $\omega$-many finite (Abelian) groups $\prod_{i \in \omega} G_i$ (with each $G_i$ having at least two elements),
		\item the group of $p$-adic integers ($\Delta_p$).
	\end{enumerate}

First we show that 
\begin{cl}
	$G'/C$ has a null but non-meager subgroup, say $H/C$ (where $C \leq H \leq G'$).
\end{cl}
\begin{proof}

In  case $\eqref{a}$, first, consider the
null non-meager subgroup $L \leq \mathbb{R}$  given by \cite[Theorem 2.3]{small-large}. Recall that the
Haar measure is the Lebesgue measure on the real line (up to a positive multiplicative constant). After identifying the circle group with $[0,1)$, the Haar measure on it coincides with the Lebesgue measure restricted to 
the unit interval. Therefore the set
\[ L' = \{ r \in [0,1) : \ \exists k \in \mathbb{Z} \ \  k+r \in L \}  \subseteq [0,1)= \mathbb{T}  \]
is a null set, and forms a subgroup in the circle group. $L' \subseteq [0,1)$
 is non-meager, since the meagerness of $L'$ would imply that $L' + \mathbb{Z} \supseteq L$ was meager in $\mathbb{R}$. 
 Given this null and non-meager subgroup $L' \leq \mathbb{T} \simeq G'/C$, let $H \leq G'$ be such that $H/C  \leq G'/C \simeq \mathbb{T}$ is a null, but nonmeager subgroup.
 
In  case  $\eqref{b}$, i.e. 
\[ G'/C \simeq \prod_{i \in \omega} F_i  \ \ (\forall i: \ 1<|F_i| < \infty) \]
\cite[Remark 4.2]{small-large} implies that there is a null but non-meager subgroup
in $G'/C$, say $H/C$.

If $G'/C$ is isomorphic to the group of the $p$-adic integers for some prime $p$, then Proposition $\ref{padikus}$ guarantees that there is an appropriate subgroup in $G'/C$.
\end{proof}
Next we have to verify that from the nullness and non-meagerness of $H/C$ (in $G'/C$) follows the
nullness and non-meagerness of $H$ in $G'$.
Let $\pi: G' \to G'/C$ denote the canonical projection.
 Now since a Haar measure $\nu$ on $G'/C$ is also $G'$-invariant and Radon, using \cite[443P/(c)]{Fremlin} (with $X= G'$, $Y= C$, $\lambda= \nu$)
we obtain that $H \leq G'$ is a subgroup of measure zero.

For the non-meagerness, suppose that $G' \setminus H$ is co-meager in $G'$. Then using  \cite[Lemma 2.6]{levelsets} (and the fact $C \leq H$), the image  of $G' \setminus H$ under the canonical projection $\pi: G' \to G'/C$ 
\[ \pi(G' \setminus H)  = (G' /C) \setminus (H/C) \]
would also be co-meager (in $G'/C$), contradicting that $H/C$ is non-meager.

Now, we have a null but non-meager subgroup $H$ in $G'$, but since $G' \leq G$ is open, it is straightforward to check that a Haar measure of $G$ restricted to $G'$ is a Haar measure of $G'$, therefore $H$ is null in $G$ too. Moreover a non-meager set 
in an open subspace (namely $G'$) is also non-meager in the whole space, thus $H \leq G$ is a null non-meager subgroup.

\end{proof}

\section{Meager but non-null subgroups}

The following result can be found in \cite{en}, but for the sake of completeness we include the sketch of the proofs. We will prove that it is consistent with $ZFC$ that in every locally compact Polish group meager subgroups are null.

We will use H. M. Friedman's theorem, which can be found in \cite{Burke}:
\begin{theorem}[H. M. Friedman] \label{Friedm}
	In the Cohen model (that is, adding $\omega_2$ Cohen reals to a model of $ZFC + CH$)
	the following holds:
	\[ \begin{array}{l} 
	 \forall  H \subseteq  2^\omega \times 2^\omega \ F_\sigma: \\
	\text{ if } (\exists C \times D \subseteq H, \ C \times D \notin \mathcal{N}), \text{ then } (\exists A \times B \subseteq H, \ A \times B \notin \mathcal{N}  \text{ measurable}) \\
	
	\end{array} \]
\end{theorem}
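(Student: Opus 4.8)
The plan is to peel off a single closed piece of $H$ by standard Fubini--type manipulations, reducing matters to a statement about a closed set with almost-full sections, and then to invoke the structure of the Cohen model for the one step that genuinely fails in $ZFC$.

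\textbf{Step 1: reduction to a closed set with almost-full sections.} Assume $C\times D\subseteq H=\bigcup_n H_n$ with the $H_n$ closed and increasing, and $C\times D\notin\mathcal N$; since the product outer measure of a rectangle factors, $\mu^*(C)=:c>0$ and $\mu^*(D)=:d>0$. Fix a small rational $\eta>0$. For each $x\in C$ the set $D$ is covered by the increasing closed sets $(H_n)_x$, so by continuity from below of outer measure $\mu^*(D\cap(H_n)_x)\to d$, and there is $m(x)$ with $\mu^*(D\cap(H_{m(x)})_x)>(1-\eta)d$. Slicing $C$ along $m(x)$ and applying continuity from below again to the increasing sets $C_m=\{x\in C:\mu^*(D\cap(H_m)_x)>(1-\eta)d\}$, there is an $n$ with $\mu^*(C_n)>(1-\eta)c$. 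Put $K:=H_n$ and $C':=C_n$, and pass to Borel hulls $\tilde C\supseteq C'$, $\tilde D\supseteq D$ with $\mu^*(C')=\mu(\tilde C)$ and $\mu(\tilde D)=d$. Since $C'$ is non-shrinkable inside $\tilde C$ and $x\mapsto\mu(K_x\cap\tilde D)$ is Borel, the bound $\mu(K_x\cap\tilde D)\ge\mu^*(D\cap K_x)>(1-\eta)d$ holds for a.e. $x\in\tilde C$. Hence $L:=K\cap(\tilde C\times\tilde D)$ is a measurable (indeed relatively closed) subset of $H$ of positive measure, living in the box $\tilde C\times\tilde D$, almost every vertical section of which fills all but an $\eta$--fraction of $\tilde D$; in particular any $k<1/\eta$ of these sections have common measure $>(1-k\eta)d>0$.

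\textbf{Step 2: extracting a positive-measure rectangle (the Cohen--model step).} What remains is to find measurable $A\subseteq\tilde C$ and $B\subseteq\tilde D$ of positive measure with $A\times B\subseteq L$, i.e. a positive-measure $A$ with $\mu\!\left(\bigcap_{x\in A}K_x\right)>0$; one then takes $B:=\bigcap_{x\in A}K_x$. This is precisely where $ZFC$ is insufficient: under $CH$ there are Sierpi\'nski-type closed sets all of whose sections are co-null but which contain no positive-measure sub-rectangle, so one cannot simply intersect the sections over all of $\tilde C$. Here one uses that the model is obtained by adding $\omega_2$ Cohen reals over a model of $CH$: all the Borel data ($K$, $\tilde C$, $\tilde D$, $\eta$) is coded by reals and so lies in an intermediate extension $V_0=V[G\restriction a]$ with $a$ countable and $V_0\models CH$, while $V[G]$ adds $\aleph_2$ further Cohen reals over $V_0$. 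Using that $\cov(\mathcal M)=\mathfrak c$ there, one builds $A$ as a perfect positive-measure set by an $\omega$-step fusion: at each stage one has a finite binary approximation, extends it while keeping the measure of the intersection of the finitely many selected sections bounded below by a fixed positive quantity (available from Step 1's finite-intersection bound), and arranges that the newly added branches of $A$ avoid the meager sets coded in the relevant countable submodels by inserting fresh Cohen reals, so that in the limit $\bigcap_{x\in[A]}K_x$ stays non-null. Each requirement excludes only a meager family of configurations, and there are fewer than $\mathfrak c$ of them, so generic choices meet them all; the resulting $A\times B\subseteq L\subseteq H$ is the sought measurable rectangle of positive measure.

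\textbf{Where the difficulty lies.} Step 1 is routine bookkeeping with outer measure and Fubini. The whole content of the theorem is Step 2: the passage from ``every finite intersection of the large sections is large'' to ``some positive-measure batch of them has a common intersection of positive measure''. This implication fails in $ZFC$, and making it go through requires the fine combinatorics of Cohen forcing --- in practice a characterization of non-nullness in the Cohen model in terms of behaviour at cofinally many coordinates, together with an interleaving of the construction of $A$ on the Cohen side with the requirement that the $\tilde D$--side stay non-null. This is the heart of Friedman's argument as carried out in \cite{Burke}, and I would expect essentially all of the effort to be concentrated there.
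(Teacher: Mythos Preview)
The paper does not prove this statement at all: it is quoted as Friedman's theorem with the attribution ``can be found in \cite{Burke}'', and no argument is given. So there is no proof in the paper to compare your proposal against.

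As for your proposal on its own merits: Step~1 is a sound piece of bookkeeping. The continuity-from-below of outer measure on increasing sequences, the measurable-hull argument transferring the section estimate from $C'$ to almost every point of $\tilde C$, and the resulting finite-intersection bound are all correct, and this reduction to a single closed $K$ with large sections is a reasonable first move.

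Step~2, however, is not a proof. The phrases ``builds $A$ as a perfect positive-measure set by an $\omega$-step fusion'', ``avoid the meager sets coded in the relevant countable submodels by inserting fresh Cohen reals'', and ``each requirement excludes only a meager family of configurations'' gesture at a strategy without specifying what the requirements are, why they are meager, or how meeting them forces $\bigcap_{x\in A}K_x$ to stay non-null in the limit. Invoking $\cov(\mathcal M)=\mathfrak c$ does not by itself produce a positive-measure $A$ with non-null common section; the actual mechanism in Friedman's argument is a specific combinatorial characterization of non-null sets in the Cohen extension, and your sketch does not reproduce it. You acknowledge as much in your final paragraph, deferring to \cite{Burke} for ``the heart of Friedman's argument''. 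That is exactly what the paper does --- cite Burke --- so your proposal ultimately lands in the same place as the paper, with the addition of a correct but inessential preliminary reduction.
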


This property of $2^\omega$ implies that every meager subgroup of $2^\omega$ is of measure zero. Similarly if it holds in a locally compact Polish group $G$, then $G$ has no meager non-null subgroups, this lemma is also from \cite{Burke} (stating it only for $G= 2^\omega$, but the proof is the same).

\begin{lemma} \label{Fszigm}
	Let $G$ be a locally compact  Polish group, $\mu$ be a left Haar measure on $G$. Assume that the following holds:
	\begin{equation} \label{tegla}
	\begin{array}{l} 
	\forall H \subseteq G \times G, \ F_\sigma: \\
	(\exists C \times D \subseteq H, \ C \times D \notin \mathcal{N}_\mu) \longrightarrow (\exists A \times B \subseteq H, \ A \times B \notin \mathcal{N}_\mu \text{ measurable}) \\
	\end{array}
	\end{equation} 
	Then every meager subgroup of $G$ is null.
\end{lemma}
\begin{rem} \label{merh}
	If $X$ is a Polish space, $\nu$ is a $\sigma$-finite Borel measure on $X$, then for every $\nu$-measurable set $H$ there exists a Borel $B$ such that
	$H \bigtriangleup B$ is null (wrt. $\nu$).
\end{rem}

In Lemma $\ref{C-lengyel}$ we show that if the condition of Lemma $\ref{Fszigm}$ holds in $2^\omega$ (e.g. in the Cohen model), then this holds for arbitrary locally compact Polish groups. This yields that it is consistent with $ZFC$ that in locally compact Polish groups  meager subgroups are always null.



\begin{lemma} \label{C-lengyel}
	Assume that the condition from Lemma $\ref{Fszigm}$  holds in $2^\omega$ (i.e. every $F_\sigma$ subset of $2^\omega \times 2^\omega$ which contains a non-null rectangle must contain a measurable non-null rectangle).  Then this condition holds in every locally compact Polish group $G$.
\end{lemma}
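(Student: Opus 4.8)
The plan is to transfer condition \eqref{tegla} from $2^\omega$ down to $G$ by means of a measure-preserving continuous surjection of the Cantor space onto a compact part of $G$. Note first that $G$, being locally compact and second countable, is $\sigma$-compact, and that its Haar measure $\mu$ is atomless (a compact neighbourhood of the identity is infinite and of finite measure, so points must be null). Now let $H\subseteq G\times G$ be $F_\sigma$ and $C\times D\subseteq H$ a non-null rectangle, so $\mu^*(C)>0$ and $\mu^*(D)>0$ by Remark~\ref{tegl}. Writing $G=\bigcup_n K_n$ with $K_n$ compact and increasing, and using that a countable union of null sets is null, we may fix a compact $K=K_n$ with $\mu^*(C\cap K)>0$ and $\mu^*(D\cap K)>0$; passing to the support of $\mu|_K$ changes $C,D$ only by $\mu$-null sets, so after replacing $C,D$ by their traces on $K$ we may assume that $\nu:=\mu|_K/\mu(K)$ is an atomless Borel probability measure of full support on the compact metric space $K$, while still $\mu^*(C)>0$ and $\mu^*(D)>0$.

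The second ingredient is the classical fact that every compact metric probability space is a continuous image of $(2^\omega,\lambda)$ with exact push-forward: there is a continuous surjection $\psi\colon 2^\omega\to K$ with $\psi_*\lambda=\nu$, where $\lambda$ is the usual product measure on $2^\omega$. Concretely, $\psi(x)=\bigcap_n K_{x|n}$ for a Cantor scheme $(K_s)_{s\in 2^{<\omega}}$ of compact subsets of $K$ with $K_\emptyset=K$, $K_s=K_{s0}\cup K_{s1}$, $\nu(K_s)=2^{-|s|}$, $\nu(K_{s0}\cap K_{s1})=0$ and $\diam K_s\to 0$ along every branch; surjectivity follows from $\mathrm{supp}\,\nu=K$.

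Now set $\widetilde H:=(\psi\times\psi)^{-1}\big(H\cap(K\times K)\big)$; since $\psi$ is continuous and $H$ is $F_\sigma$, $\widetilde H$ is an $F_\sigma$ subset of $2^\omega\times 2^\omega$. Let $\widetilde C:=\psi^{-1}(C)$ and $\widetilde D:=\psi^{-1}(D)$. As $\psi$ is onto and $C,D\subseteq K$, we have $(\psi\times\psi)(\widetilde C\times\widetilde D)=C\times D\subseteq H\cap(K\times K)$, hence $\widetilde C\times\widetilde D\subseteq\widetilde H$. Moreover $\widetilde C$ is non-null: taking a measurable hull $E\supseteq\widetilde C$ in $2^\omega$, we get $\psi(2^\omega\setminus E)\cap C=\emptyset$ (if $\psi(x)\in C$ then $x\in\psi^{-1}(C)\subseteq E$), so $C\subseteq K\setminus\psi(2^\omega\setminus E)$; the set $\psi(2^\omega\setminus E)$ is analytic, hence $\mu$-measurable, and $\mu\big(\psi(2^\omega\setminus E)\big)=\mu(K)\,\lambda\big(\psi^{-1}\psi(2^\omega\setminus E)\big)\ge\mu(K)\,\lambda(2^\omega\setminus E)$, whence $\mu^*(C)\le\mu(K)\,\lambda(E)=\mu(K)\,\lambda^*(\widetilde C)$ and so $\lambda^*(\widetilde C)>0$; similarly $\lambda^*(\widetilde D)>0$. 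Thus $\widetilde C\times\widetilde D$ is a non-null rectangle inside the $F_\sigma$ set $\widetilde H$, so the hypothesis in $2^\omega$ yields a measurable rectangle $\widetilde A\times\widetilde B\subseteq\widetilde H$ with $\lambda(\widetilde A),\lambda(\widetilde B)>0$, and shrinking $\widetilde A,\widetilde B$ we may take them Borel. Putting $A:=\psi(\widetilde A)$ and $B:=\psi(\widetilde B)$, these are analytic, hence $\mu$-measurable, subsets of $K$, with $\mu(A)=\mu(K)\,\lambda(\psi^{-1}\psi(\widetilde A))\ge\mu(K)\,\lambda(\widetilde A)>0$ and similarly $\mu(B)>0$, while $A\times B=(\psi\times\psi)(\widetilde A\times\widetilde B)\subseteq(\psi\times\psi)(\widetilde H)\subseteq H$. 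Hence $A\times B\subseteq H$ is a measurable rectangle of positive $\mu\times\mu$-measure, which is exactly \eqref{tegla} for $G$.

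The only step that is not mere bookkeeping is the construction of $\psi$, and more precisely its continuity: one must build the Cantor scheme so that the pieces $K_s$ simultaneously have exactly dyadic $\nu$-mass, pairwise $\nu$-null intersections, and shrinking diameters. The standard way to reconcile these is to refine, at each level, a finite cover of $K$ by small closed balls whose bounding spheres are $\nu$-null (only countably many radii are "bad"), and then to balance the $\nu$-masses using the atomlessness of $\nu$; this is the classical realization of a compact metric probability space over the Cantor space. Everything downstream of it — continuous preimages of $F_\sigma$ sets are $F_\sigma$, analytic sets are universally measurable, and the outer-measure hull computations above — is routine.
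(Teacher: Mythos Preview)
Your argument hinges on the ``classical fact'' that for any compact metric space $K$ with an atomless Borel probability $\nu$ of full support, there is a \emph{continuous} surjection $\psi\colon 2^\omega\to K$ with $\psi_*\lambda=\nu$. This is false. Take $G=K=\prod_{n\in\omega}\mathbb Z/3\mathbb Z$ with its Haar (uniform product) measure $\nu$; this is a non-discrete compact Polish group, so it is a legitimate instance of the lemma. If $\psi\colon 2^\omega\to 3^\omega$ is continuous, then for every clopen $V\subseteq 3^\omega$ the preimage $\psi^{-1}(V)$ is clopen in $2^\omega$, hence $\lambda(\psi^{-1}(V))$ is a dyadic rational. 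In particular $(\psi_*\lambda)(\{y:y_0=0\})$ would be dyadic, whereas $\nu(\{y:y_0=0\})=1/3$. So no such $\psi$ exists, and your Cantor-scheme construction cannot deliver the exact mass condition $\nu(K_s)=2^{-|s|}$ together with continuity.

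This is precisely why the paper does \emph{not} look for a single continuous measure-preserving map. Instead it invokes the measure-isomorphism theorem to get a Borel bijection between conull subsets of $2^\omega$ and $G$ that rescales the measures, and then uses Lusin's theorem to cut the domain into countably many compact pieces $C_i$ on which the map is a homeomorphism onto compact $K_i\subseteq G$. Continuity on each compact piece is exactly enough to ensure that $f^{-1}$ of an $F_\sigma$ set is again $F_\sigma$ (a countable union of sets closed in the $C_i$, hence compact), while the measure-isomorphism part handles the null/positive transfer in both directions --- which your surjection cannot do even if you weaken ``$\psi_*\lambda=\nu$'' to mere equivalence of measures, since your outer-measure hull argument for $\lambda^*(\widetilde C)>0$ genuinely needs control of $\lambda(\psi^{-1}\psi(M))$ for $\lambda$-null $M$, and a non-injective $\psi$ gives none. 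The rest of your write-up (reduction to a compact $K$, the hull computation, pushing analytic sets forward) is fine; the gap is the existence of $\psi$.
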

Combining Theorem $\ref{Friedm}$ with Lemma $\ref{Fszigm}$ yields the following.
\begin{theorem} \label{fov}
	It is consistent with $ZFC$ that in every locally compact Polish group meager subgroups are always null.
\end{theorem}
\begin{rem} \label{tegl}
	If $X$ is a Polish space, and $\mu$ is a $\sigma$-finite Borel measure, then for any rectangle $C \times D \subseteq X \times X$,
	it is non-null (wrt. the product measure $\mu \times \mu$) iff $C \notin \mathcal{N}_\mu$ and $D \notin \mathcal{N}_\mu$.
	
\end{rem}

\begin{proof}(Lemma $\ref{C-lengyel}$)
	Assume that $\mu$ denotes the left Haar measure on $G$, $\nu$ denotes the Haar measure on $2^\omega$. Then
	since locally compact Polish groups are $\sigma$-compact, the Haar measure $\mu$ is $\sigma$-finite. 
	\begin{lemma}
	There exist $F_\sigma$ sets $C \subseteq 2^\omega$ and $K \subseteq G$ of full measure, and a Borel bijection $f: C \to K$ such that for each $S \subseteq C$
	\begin{enumerate}[(i)]
			\item \label{nullt} $\nu(S)=0$ iff $\mu(f(S))=0$,
			\item \label{Fszt} $S$ is measurable iff $f(S)$ is measurable,
	\end{enumerate}
	and for the bijection $f \times f: C \times C \to K \times K$
	\begin{equation} \label{fkf}
	  S' \subseteq C \times C \text{ is }F_\sigma \quad \text{ iff } \quad (f \times f)(S') \subseteq K \times K \text{ is } F_\sigma.
	\end{equation}
	\end{lemma}
	
	\begin{proof}
		
	For the construction of $f$ we will use the following claim. One can prove it using the regularity of $\nu$ and $\mu$, the isomorphism theorem for measures \cite[Thm 17.41.]{Kechris} and  Lusin's theorem (see \cite[Thm 17.12]{Kechris}).
	\begin{cl} \label{clejm}
		There is a sequence of pairwise disjoint compact sets $C_i$ ($i \in \omega$) in $2^\omega$, for which
		$\nu(C_i)>0$ and $\nu(2^\omega \setminus \bigcup_{i \in \omega} C_i) = 0$, and similarly a sequence of pairwise disjoint compact sets $K_i$ ($i \in \omega$)
		in $G$ with $\mu(K_i)>0$ and $\mu(G \setminus \bigcup_{i \in \omega} K_i)=0$, and there exist  homeomorphisms $f_i: C_i \to K_i$,
		and positive constants $r_i$ such that
		\[ \forall B \subseteq C_i \text{ Borel: } \ \nu(B) = r_i \mu(f_i(B)). \]
	\end{cl}

	Now let $C = \bigcup_{i \in \omega} C_i \subseteq 2^\omega$, $K = \bigcup_{i \in \omega} K_i \subseteq G$,  $f= \bigcup_{i \in \omega} f_i$ given by Claim $\ref{clejm}$. Then
	\[ f: C \to K \]
	is a Borel bijection, and $C$, $K$ and $f$ satisfy conditions $\eqref{nullt}-\eqref{Fszt}$ (by Remark $\ref{merh}$, $S$ differs from a Borel by a null set, $f^{-1}$ is a Borel function, and $f$ maps a null set to a null set). 
	For $\eqref{fkf}$ note that $f \times f = \bigcup_{i,j \in \omega }f_i \times f_j$ is the union of homeomorphisms between compact sets. 
	\end{proof}
		Observe that $K \times K \subseteq G \times G$, $C \times C \subseteq 2^\omega \times 2^\omega$ are $F_\sigma$ sets of full measure.
	
	Fix an $F_\sigma$ set $F \subseteq G \times G$, and let $D \times E \subseteq F$ be a non-null rectangle. Since by Remark $\ref{tegl}$
	$D$ and $E$ are non-null sets and $\mu(G \setminus K)=0$, $(D \cap K) \times (E \cap K)$ is a non-null rectangle.
	Moreover, since $K \times K$ is an $F_\sigma$ set of full measure (w.r.t. $\mu \times \mu$),
	\[ (D \cap K) \times (E \cap K) \subseteq F \cap (K \times K), \]
	i.e. the $F_\sigma$ set $F \cap (K \times K)$ contains a non-null rectangle, therefore we can assume that $F \subseteq K \times K$, it suffices to find measurable non-null rectangles in such $F$-s.
	
	If $D \times E \subseteq F$ is a non-null rectangle, then $f^{-1}(D) \times f^{-1}(E)$ is non-null by $\eqref{nullt}$.
	Now since $(f \times f)^{-1}(F) \subseteq 2^\omega \times 2^\omega$ is $F_\sigma$ (by $\eqref{fkf}$) containing the non-null rectangle $f^{-1}(D) \times f^{-1}(E)$, it also contains a product $A \times B$ with $A$, $B$ measurable, $\nu(A), \nu(B) >0$. Then  by $\eqref{nullt}$, $\eqref{Fszt}$ $f^{-1}(A)$, $f^{-1}(B)$ are measurable, $\mu$-positive subsets of $G$ such that $f^{-1}(A) \times f^{-1}(B) \subseteq F$, as desired.

\end{proof}

In \cite{en} we gave affirmative answers to both questions from \cite[Problem 5.1]{small-large} in the general (locally compact, not necessarily Polish) case. However, due to Christensen 
we can extend the notion of the null ideal to every (not necessarily locally compact) Polish group. 
We say that a subset $X$ of a Polish group is Haar-null (in the sense of Christensen), if there is a Borel probability measure $\mu$ on $G$, and a Borel set $B \supseteq X$, such that for each $g,h \in G$ $\mu(gBh) = 0$ \cite{Christ}. The following question is open.
\begin{question}

	What can we say about  small-large subgroups of non-locally compact Polish groups, replacing "null wrt. the Haar measure" by "Haar-null"? Do (always) exist Haar-null but non-meager, and meager but non-Haar-null subgroups in non-locally compact Polish groups?





\end{question}

\end{document}